\theoremstyle{plain}
\newtheorem{theorem}{Theorem}[section]
\newtheorem{lemma}[theorem]{Lemma}
\theoremstyle{definition}
\newtheorem{defn}[theorem]{Definition}
\numberwithin{equation}{section}
\def \mff{\mathsf}
\def \mc{\mathcal}
\def \inv{^{-1}}
\def \v{\vskip 0.1in}
\def \real{\mathbb{R}}
\begin{document}

\title[The $n$-dimensional Abreu's Equation]
{Interior Estimates\\ for the $n$-dimensional Abreu's Equation}

\author[Chen]{Bohui Chen}
\address{Yangtze Center of Mathematics\\
Department of Mathematics\\ Sichuan University\\
Chengdu, 610064, China}
\email{bohui@cs.wisc.edu}
\author[Han]{Qing Han}
\address{Department of Mathematics\\
University of Notre Dame\\
Notre Dame, IN 46556, USA}
\email{qhan@nd.edu}
\address{Beijing International Center for Mathematical Research\\
Peking University\\
Beijing, 100871, China} \email{qhan@math.pku.edu.cn}
\author[Li]{An-Min Li}
\address{Yangtze Center of Mathematics\\
Department of Mathematics\\
Sichuan University\\
 Chengdu, 610064, China}
\email{math\_li@yahoo.com.cn}
\author[Sheng]{Li Sheng}
\address{Department of Mathematics\\
Sichuan University\\
Chengdu, 610064, China}
\email{l\_sheng@yahoo.cn}

\thanks{Chen acknowledges the support of NSFC Grant 11221101.
Han acknowledges the support of NSF
Grant DMS-1105321.
Li acknowledges the support of NSFC Grants 11221101 and 11171235.
Sheng acknowledges the support of NSFC Grants 11101129
and 11201318.}

\begin{abstract}
We study the Abreu's equation in $n$-dimensional polytopes and derive
interior estimates of solutions under the assumption of the
uniform $K$-stability.
\end{abstract}

\maketitle

\section{Introduction}\label{Sec-Intro}
%\v\n

The primary goal of this paper is to study a nonlinear fourth-order partial differential equation
for an $n$-dimensional convex function $u$  of the form
\begin{equation}\label{eqn2.4}
\sum_{i,j=1}^n\frac{\partial^2 u^{ij}}{\partial \xi_i\partial \xi_j}=-A.
\end{equation}
Here, $A$ is a given function and $(u^{ij})$ is the inverse of the Hessian matrix $(u_{ij})$.

The equation \eqref{eqn2.4} was introduced by Abreu \cite{Abreu1998}
in the study of the scalar curvature of toric varieties, in which case the domain
of $u$ is a bounded convex polytope in $\mathbb R^n$
and $A$ is the scalar curvature of toric varieties.
Abreu proved that its solution $u$ yields an extremal metric on toric varieties when
$A$ is a linear function in $\xi$.
Guillemin \cite{Guillemin1994} observed that $u$ is required to have prescribed boundary behavior near
the boundary of the polytope.
%In a series of papers
%\cite{D1}, \cite{D2}, \cite{D3}, \cite{D4},
%Donaldson studied the equation \eqref{eqn2.4} extensively.
%The existence of extremal metrics on toric varieties
%is equivalent to the
%solvability of the Abreu's equation \eqref{eqn2.4} in the special class of functions in polytopes
%introduced by Gulliemin.
As is well known now, the solvability of the equation \eqref{eqn2.4} is closely related to certain
stability conditions.

Tian \cite{T1} first introduced $K$-stability and proved that it is a necessary 
condition for the existence of a K\"ahler-Einstein metric with positive scalar curvature.  
The sufficiency on Fano manifolds was recently established by 
Tian \cite{Tian2013}, and 
by Chen, Donaldson and Sun \cite{CDS2013}. This provides an 
affirmative answer to the Yau-Tian-Donaldson conjecture on Fano manifolds. 

%Tian \cite{T1} first formulated an analytic 
%``stability"
%stability condition which is conjectured to be
%equivalent to the existence of a K\"ahler-Einstein metric, 
%often referred as the Yau-Tian-Donaldson conjecture. 
%He also defined the algebro-geometric notion of $K$-stability and proved 
%the necessary part of the equivalence. The sufficient part was recently established by 
%Tian \cite{Tian2013}, and 
%soon later 
%by Chen, Donaldson and Sun \cite{CDS2013}.

Donaldson \cite{D1} generalized the notion  of $K$-stabililty by giving
an algebro-geometric definition of the Futaki invariant. In particular, he
formulated  $K$-stability for polytopes and  conjectured that
it is equivalent to the existence of K\"ahler metrics of constant scalar
curvature (cscK metrics) on toric varieties.
Donaldson \cite{D2} also considered a
stronger version of stability which we call   {\em uniform
K-stability} in this paper.
%He proved the existence of weak solutions  under the assumption of the
%uniform $K$-stability. ({\bf Is this proved in \cite{D2}?})

Under the assumption of the uniform $K$-stability,
Donaldson \cite{D2} derived interior estimates for
solutions of the Abreu's equation \eqref{eqn2.4}  satisfying Guillemin's boundary conditions
in polytopes in the case of
dimension 2.
Donaldson \cite{D4} subsequently solved \eqref{eqn2.4} when $A$ is constant in the
2 dimensional case, and hence proved the existence of metrics with
constant scalar curvature on 2-dimensional toric varieties.
Recently, Chen, Li and Sheng  \cite{CLS1}, \cite{CLS2} generalized this result and
proved the existence of metrics with prescribed scalar curvature
on 2-dimensional toric varieties.
%Their proof is based on
%a careful analysis of boundary
%behaviors of solutions of the Abreu's equation.

These works suggest that the uniform $K$-stability is the correct
notion of the stability associated with the existence of
metrics with prescribed scalar curvature on toric varieties.
Indeed, Chen, Li and Sheng \cite{CLS4} proved that the uniform $K$-stability {is} a {\em necessary}
condition of the existence of solutions of \eqref{eqn2.4} satisfying Guillemin's boundary conditions.
It is natural to ask whether such a uniform $K$-stability {is} a {\em sufficient}
condition. Results by Donaldson \cite{D4} and by Chen, Li and Sheng  \cite{CLS1}, \cite{CLS2}
answered this question affirmatively in the 2-dimensional case.

It remains an open problem to study the existence of
metrics with prescribed scalar curvature in higher dimensional
toric varieties.

Recently, there have been several results on the pure PDE aspects of the Abreu's equation.
Feng and Sz\'ekelyhidi \cite{FengSzeke} studied periodic solutions of the Abreu's equation
and proved the existence of a smooth periodic solution of \eqref{eqn2.4}
if $A$ is periodic and has a zero average.
Chen, Li and Sheng \cite{CLS5}  studied the Abreu's equation in bounded, smooth and
strictly convex domains and proved the existence of smooth solutions of \eqref{eqn2.4} for a class of prescribed
boundary values.

In order to relate solutions of \eqref{eqn2.4} to metrics with prescribed scalar curvature on toric varieties,
the equation \eqref{eqn2.4} is required to hold in polytopes and its solutions
satisfy the Guillemin's boundary conditions. This is  a major difficulty associated with \eqref{eqn2.4}.
As the first step of studying the Abreu's equation \eqref{eqn2.4},
we discuss interior estimates of its solutions in polytopes satisfying Guillemin's boundary conditions.
Following Donaldson \cite{D2}, we will keep the differential
geometry in the background.

Before stating the main result in this paper, we first introduce some notations and terminologies.

Let $\Delta$ be a bounded open polytope in $\real^n$, $c_k$ be a constant and
$h_k$ be an affine linear function in $\mathbb R^n$, $k=1, \cdots, K$.
Suppose that
$\Delta$ is defined by linear inequalities $h_k(\xi)-c_k>0$,  for $k=1, \cdots, K$,
where each $h_k(\xi)-c_k=0$ defines a facet of $\Delta$.
%({\bf Is $H_k$ a linear function?})
Write $\delta_k(\xi)=h_k(\xi)-c_k$
and set
\begin{equation}\label{eqn2.1}
v(\xi)=\sum_k\delta_k(\xi)\log\delta_k(\xi).
\end{equation}
This function was first introduced by Guillemin \cite{Guillemin1994}.
It defines a K\"ahler metric on the toric variety defined by $\Delta$ if $\Delta$ is a Delzant polytope.

We first introduce several classes of functions. Set 
\begin{align*} 
\mc C&=\{u\in C(\bar\Delta):\, \text{$u$ is
convex on $\bar\Delta$ and smooth on $\Delta$}\},\\
\mc S&=\{u\in C(\bar\Delta):\, \text{$u$ is convex  on $\bar\Delta$
and $u-v$ is smooth on $\bar\Delta$}\},\end{align*}
where $v$ is given in \eqref{eqn2.1}. 
For a fixed
point $p_o\in \Delta$, we consider 
\begin{align*}
{\mc C}_{p_o}&=\{u\in \mc C:\, u\geq u(p_o)=0\},\\
\mc S_{p_o}&=\{ u\in \mc S :\, u\geq u(p_o)=0\}.\end{align*}
We say functions in ${\mc C}_{p_o}$ and ${\mc S}_{p_o}$ are {\it normalized} at $p_o$. 
%By adding a linear function, we assume that $u$ is normalized at $p_o$.

Let $A$ be a  smooth function on $\bar \Delta$.
Consider the  functional
\begin{equation}\label{eqn2.2}
\mc F_A(u)=-\int_\Delta \log\det(u_{ij})d\mu+\mc L_A(u),
\end{equation}
where
\begin{equation}\label{eqn2.3}
\mc L_A(u)=\int_{\partial\Delta}ud\sigma-\int_\Delta Au d\mu.
\end{equation}
When $A$ is a constant, $\mc F_A$ is known to be the Mabuchi functional
and $\mc L_A$ is closely related to the Futaki invariants.

The Euler-Lagrangian equation for $\mc F_A$ is
\begin{equation*}
%\label{eqn2.4}
\sum_{i,j}\frac{\partial^2 u^{ij}}{\partial \xi_i\partial \xi_j}=-A.
\end{equation*}
This is the Abreu's equation \eqref{eqn2.4}.
It is known that,
if $u\in \mc S$ satisfies the equation \eqref{eqn2.4}, then $u$ is an absolute minimizer for
$\mc F_A$ on $\mc S$.

%\subsection{$K$-Stability}

%As we discussed earlier, the solvability of \eqref{eqn2.4} in $ \mc S$ is
%closely related to certain stability.
%In \cite{D2}, Donaldson introduced a  new notion of stability for toric varieties
%which we call uniformly $K$-stable.
%The latter class of functions was considered by Donaldson in \cite{D1}.
%Since we essentially consider $\mc C_\infty/\mc G$, we introduce a
%subset $\tilde{\mc C}\subset \mc C_\infty$ that
%is isomorphic to $\mc C_\infty/\mc G$.  This can be done as the following.
%({\bf These statements are not necessary.})

%({\bf Do we need this definition?})

\begin{defn}\label{defn_2.4}
Let $A$ be a smooth function on $\bar\Delta$.
Then, $({\Delta},A)$ is called {\em uniformly $K$-stable} if
the functional $\mc L_A$ vanishes on affine-linear functions and
there exists a constant $\lambda>0$
such that, for any $u\in  {\mc C}_{p_o}$, 
\begin{equation}\label{eqn2.6}
\mc L_A(u)\geq \lambda\int_{\partial \Delta} u d \sigma.
\end{equation}
We also say that $\Delta$ is
$(A,\lambda)$-stable.
\end{defn}

The conditions in Definition \ref{defn_2.4} are exactly the contents of Condition 1
\cite{D2}, introduced by Donaldson.

%\v

The following interior estimate is the main result in this paper.
% and generalizes
%Donaldson's interior estimates for dimension 2.
%Refer to the next section for notations and  terminologies.

\begin{theorem}\label{theorem_1.3}
Let $\Delta$ be a bounded open polytope in $\real^n$
and $A$ be a smooth function on $\bar\Delta$. %Suppose its closure $\bar\Delta$ is compact.
Suppose  $(\Delta,A)$ is  uniformly $K$-stable and $u$ is a solution in $\mc S_{p_o}$
of  the Abreu's equation \eqref{eqn2.4}.
Then, for any $\Omega\subset\subset \Delta$, any nonnegative integer $k$ and any 
constant $\alpha\in (0,1)$,
%there is a sequence of constants $C(\Omega,k)$ such that,
\begin{equation*}
\|u\|_{C^{k+3,\alpha}(\Omega)}\leq C\|A\|_{C^k(\bar\Delta)},
%\nonumber
\end{equation*}
where $C$ is a positive constant depending only on $n$, $k$, $\alpha$, 
$\Omega$, and $\lambda$
in the uniform $K$-stability.
%In particular,
%\begin{equation}
%\|u\|_{C^{3,\alpha}(\Omega)}\leq C(\Omega,0)\|A\|_{L^\infty}.\nonumber
%\end{equation}
\end{theorem}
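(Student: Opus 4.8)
The plan is to reduce Theorem \ref{theorem_1.3} to a pair of two-sided a priori bounds on the Monge--Amp\`ere measure $\det(u_{ij})$ in the interior, after which the linear theory for the linearized Monge--Amp\`ere operator together with Schauder estimates bootstraps to the stated $C^{k+3,\alpha}$ control. Throughout I would exploit two structural features of \eqref{eqn2.4}. First, $u\in\mathcal{S}_{p_o}$ is the absolute minimizer of $\mathcal{F}_A$ on $\mathcal{S}$, so its functional value can be compared against the fixed Guillemin function $v$ of \eqref{eqn2.1}. Second, writing $\theta=\log\det(u_{ij})$, the equation takes the second-order form $\Delta_u\theta = A+|\nabla\theta|_u^2$, where $\Delta_u=\sum_{i,j}u^{ij}\partial_i\partial_j$ is the (divergence-form) linearized Monge--Amp\`ere operator and $|\nabla\theta|_u^2=\sum_{i,j} u^{ij}\theta_i\theta_j$. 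The affine invariance of the problem lets me normalize the relevant sublevel sets of $u$ at each step.

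First I would extract integral estimates from stability. Comparing $\mathcal{F}_A(u)\le\mathcal{F}_A(v)$ and using that $\mathcal{L}_A$ vanishes on affine-linear functions, the uniform $K$-stability inequality \eqref{eqn2.6} yields \emph{simultaneously} an upper bound for $\int_{\partial\Delta}u\,d\sigma$ and for the entropy $-\int_\Delta\log\det(u_{ij})\,d\mu$, depending only on $\lambda$, $A$, and $\Delta$. Next, convexity together with the normalization $u\ge u(p_o)=0$ and the boundary bound gives an interior bound $\sup_\Omega u\le C$ and, more importantly, control on the geometry of the sections of $u$: away from $\partial\Delta$ they are comparable to ellipsoids of controlled eccentricity.

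The core is the two-sided bound on $\det(u_{ij})$ in the interior. Using the double-divergence structure of \eqref{eqn2.4} and the maximum principle for $\Delta_u$ applied to $\theta$ and to affine-invariant combinations of the second derivatives, I would first obtain an interior upper bound on $\det(u_{ij})$; the matching lower bound, equivalently interior strict convexity of $u$, is the hard part. To produce it I would run a blow-up/compactness argument: were the estimate to fail along a sequence of solutions, the affine-normalized limit would be a convex function degenerating along an affine subspace, and $\mathcal{L}_A$ evaluated on this limit would violate \eqref{eqn2.6}, a contradiction. This is exactly where the uniform $K$-stability is genuinely used, namely to exclude the degeneration near the facets of $\Delta$ that convexity alone permits. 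With both bounds in hand, Caffarelli's regularity theory gives interior strict convexity and a uniform $C^{1,\alpha}$ modulus for $u$, while the Caffarelli--Guti\'errez Harnack inequality upgrades $\theta$ to $C^\alpha$.

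Finally I would bootstrap. Once $\det(u_{ij})$ is pinched between two positive constants and $\theta\in C^\alpha$, Caffarelli's $C^{2,\alpha}$ estimate for Monge--Amp\`ere gives $u\in C^{2,\alpha}(\Omega)$, so on $\Omega$ the operator $\Delta_u$ is uniformly elliptic; differentiating $\Delta_u\theta = A+|\nabla\theta|_u^2$ together with $\det(u_{ij})=e^\theta$ and applying Schauder estimates iteratively yields $\|u\|_{C^{k+3,\alpha}(\Omega)}\le C\|A\|_{C^k(\bar\Delta)}$ with $C$ depending only on $n,k,\alpha,\Omega,\lambda$. I expect the main obstacle to be precisely the interior lower bound on the determinant, that is, the strict convexity: the affine-invariant estimates for $\Delta_u$ must be fused with the global stability constraint so as to control the non-compactness coming from the unbounded Guillemin behavior at $\partial\Delta$, and rendering the compactness argument quantitative, so that the constants depend only on the stated data, is the delicate point.
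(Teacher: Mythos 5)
Your reduction of the theorem to two-sided interior bounds on $\det(u_{ij})$ plus Caffarelli--Guti\'errez and Caffarelli--Schauder bootstrapping is the right general frame (it is also the frame of the paper), but the way you propose to obtain those bounds has a genuine gap, and it sits exactly where the higher-dimensional case differs from Donaldson's $n=2$ argument. You have the two bounds the wrong way around: the interior \emph{lower} bound on $\det(u_{ij})$ is the easy, dimension-free part (Lemma \ref{lemma_3.1}, Donaldson's Theorem 5; it uses only the equation and the Guillemin boundary behavior, with no stability and no compactness), and it is \emph{not} equivalent to strict convexity --- in $n\ge 3$ Pogorelov's example has two-sided determinant bounds yet is affine along a segment. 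The real obstruction is the \emph{upper} bound: the Pogorelov-type maximum-principle estimate (Lemma \ref{lemma_3.2}) applies only on a section $\bar S_u(p_o,C)$ compactly contained in $\Delta$ with a gradient bound there, and the existence of such sections is precisely the ``modulus of convexity'' that Donaldson could verify only for $n=2$. Your assertion that convexity, normalization and the boundary bound already make interior sections ``comparable to ellipsoids of controlled eccentricity'' assumes exactly this missing ingredient; a normalized convex function with bounded boundary integral can perfectly well degenerate along segments reaching $\partial\Delta$.

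The mechanism you propose to close this gap --- a blow-up in which a degenerating affine-normalized limit ``violates \eqref{eqn2.6}'' --- cannot work as stated: \eqref{eqn2.6} is an inequality satisfied by \emph{every} function in $\mc C_{p_o}$, degenerate or not, so a degenerate convex limit does not by itself contradict stability. In the paper, stability is used only once and much more softly: integrating \eqref{eqn2.4} by parts gives $\mc L_{A}(u)=n\,\mathrm{Area}(\Delta)$, so \eqref{eqn2.6} yields $\|u\|_b\le\lambda^{-1}n\,\mathrm{Area}(\Delta)$ (Lemma \ref{prop_2.7}) and hence local uniform convergence; every subsequent step is independent of stability. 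What actually replaces your missing step is the paper's main new idea, absent from your proposal: use Lemma \ref{lemma_3.1} together with weak convergence of Monge--Amp\`ere measures to find a single point of strict convexity, and then pass to the Legendre transforms $f^{(k)}$, which are defined on all of $\mathbb R^n$, where sections are \emph{automatically} compact. There one proves the new weighted global upper bound
$\exp\{-\mff C_3 f\}\,\det(u_{ij})\,(d+f)^{-2n}\le \mff C_4$ (Lemma \ref{lemma_3.4}) by a maximum-principle computation; combined with Lemma \ref{lemma_3.1} this pinches $\det(f^{(k)}_{ij})$ on every section $S_f(0,C)$, and Caffarelli--Guti\'errez plus Caffarelli then give smoothness of $f$, hence of $u$, on all of $\Delta$. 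Without this Legendre-transform step, or a genuine quantitative substitute for it, your outline reproduces the two-dimensional proof and stalls exactly where that proof stalls in dimensions $n\ge 3$.
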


%({\bf What is the dependence of $\alpha$?})

As we mentioned earlier, Donaldson \cite{D2} proved Theorem \ref{theorem_1.3} for $n=2$.
A crucial step in his proof is a derivation of lower and upper bounds of determinants of
the Hessian of
solutions. Donaldson's lower bound holds for all dimensions. However,
his upper bound
is limited to dimension 2.
%for his upper bound,
%the argument applies in any dimension but the result requires additional
%information for its application, which
%is limited to dimension 2.
A major contribution in this paper is
a new upper bound of determinants
of the Hessian in all dimensions. This new upper bound
relates to the Legendre transforms of solutions. Once we have established upper
and lower bounds of determinants of the Hessian, we can prove Theorem \ref{theorem_1.3}
with the help of estimates for linearized Monge-Amp\`{e}re equations due to
Caffarelli and Guti\'{e}rrez \cite{C-G} and estimates for Monge-Amp\`{e}re equations due to
Caffarelli \cite{C1}. Legendre transforms play an important role in our arguments.
In fact, we establish the interior estimates for the Legendre transform of $u$,
instead of
for $u$ directly.

%Now we discuss the proof of Theorem \ref{theorem_1.3}.

This paper is organized as follows. In Section \ref{sec-Prliminaries}, we
derive an equivalent equation for the Legendre transforms.
%necessary notations and terminologies. In particular, we discuss
%the uniform $K$-stability.
In Section \ref{Sec-Determinants}, we derive an upper bound
of the determinants of the Hessian of solutions satisfying
the Guillemin's boundary conditions. Such an upper bound plays an important role in this paper.
Finally in Section \ref{Sec-ProofMain}, we prove Theorem \ref{theorem_1.3}.

\section{Preliminaries}\label{sec-Prliminaries}

%\subsection{A variational problem}\label{sect_1.1}

%\subsection{Calabi metrics and basic formulas}\label{sect_2.1}

\smallskip

In this section, we write the Abreu's equation \eqref{eqn2.4} in its equivalent form for
Legendre transforms.
%This equivalent form plays an essential role in this paper.

Let $f=f(x)$ be a smooth and strictly convex function defined in a
convex domain  $\Omega\subset\real^n$.  As $f$ is strictly
convex, $G_f$ defined by
$$
G_f=\sum_{i,j}\frac{\partial^{2}f}{\partial
x_{i}\partial x_{j}}d x_i dx_j= \sum_{i,j} f_{ij}d x_i dx_j
$$
is a Riemannian metric in $\Omega$.
The gradient of $f$
defines a (normal) map $\nabla^f$ from $\mathbb{R}^n$ to $\mathbb{R}^{*n}$:
$$
\xi=(\xi_1,...,\xi_n)=\nabla^f(x) =\left(\frac{\partial f}{\partial
x_1},...,\frac{\partial f}{\partial x_n}\right).
$$
The function $u$ on $\mathbb{R}^{*n}$
$$
u(\xi)=x\cdot\xi - f(x)
$$
is called the {\it Legendre transform} of $f$. We write
$$u=L(f),\;\;\; \Omega^\ast=\nabla^f(\Omega)\subset \mathbb{R}^{*n}.$$
Conversely, $f=L(u).$ It is well-known that $u(\xi)$ is a smooth and strictly convex function.
Corresponding to $u$, we have the metric
$$G_u=\sum_{i,j}\frac{\partial^{2}u}{\partial
\xi_{i}\partial \xi_{j}}d \xi_i d\xi_j=\sum_{i,j} u_{ij}d\xi_id\xi_j.$$
Under the normal map $\nabla^f$, we have
\begin{align*}
\frac{\partial \xi_i}{\partial x_k}&=\frac{\partial^2 f}{\partial x_i\partial x_k},\\
\left(\frac{\partial^2 f}{\partial x_i\partial x_k}\right)
&=\left(\frac{\partial^2 u}{\partial \xi_i\partial \xi_k}\right)^{-1},\end{align*}
and
$$\det\left(\frac{\partial^2 f}{\partial x_i\partial x_k}\right)
=\det\left(\frac{\partial^2 u}{\partial \xi_i\partial \xi_k}\right)^{-1}.$$ Then,
$$(\nabla^f)^*(G_u)=\sum_{i,j}\frac{\partial^{2}f}{\partial
x_{i}\partial x_{j}}d x_i dx_j=G_f,$$
i.e., $ \nabla^f: (\Omega, G_f)\to (\Omega^*,
G_u) $ is locally isometric.
%\v

Set
\begin{equation}\label{eqn2.z}\rho=[\det(f_{ij})]^{-\frac{1}{n+2}},\end{equation}
and
\begin{equation}\label{eqn2.a}\Phi=\frac{\|\nabla \rho\|^{2}_{G}}{\rho^{2}}.\end{equation}

Now we derive a formula for
the Laplace-Beltrami operator $\Delta$ in terms of $x_1,...,x_n$ and $f$. Recall that
\begin{equation*}
\Delta
=\frac{1}{\sqrt{\det(f_{kl})}}\sum_{i,j}\frac{\partial}{\partial
x_i}\left(f^{ij}\sqrt{\det\left(f_{kl}\right)}\frac{\partial}{\partial
x_j}\right),\end{equation*}
where
$(f^{ij})$ denotes the inverse matrix of $(f_{ij})$ and
$f_{ij}=\frac{\partial^2 f}{\partial x_i\partial x_j}.$ By a direct calculation,  we get
\begin{equation}\label{eqn2.9}
\Delta  = \sum_{i,j} f^{ij}\frac{\partial^2}{\partial x_i
\partial x_j} -\frac{n+2}{2\rho} \sum_{i,j}
f^{ij}\frac{\partial \rho}{\partial x_j} \frac{\partial } {\partial x_i}
 + \sum_{i,j} \frac{\partial f^{ij} }{\partial x_i}
 \frac{\partial  }{\partial x_j}.\end{equation}
Differentiating  the equality $\sum f^{ik}f_{kj} = \delta^i_j$,
we have  $$\sum_{i,k} \frac{\partial f^{ik} }{\partial x_i} f_{kj} = -
\sum_{i,k} f^{ik} \frac{\partial f_{kj}}{\partial x_i} =
\frac{(n+2)}{\rho} \frac{\partial \rho }{\partial x_j}.$$ It
follows that
\begin{equation}\label{eqn2.10}
\sum_i\frac{\partial f^{ik} }{\partial x_i} =
\frac{(n+2)}{\rho}\sum_j f^{jk} \frac{\partial \rho }{\partial
x_j}.\end{equation}
Inserting \eqref{eqn2.10} into \eqref{eqn2.9},  we obtain
(cf. \cite{L-J-2})
\begin{alignat}{1}\label{eqn2.11}\Delta &= \sum_{i,j}
f^{ij}\frac{\partial^2}{\partial x^i
\partial x^j} + \frac{n+2}{2\rho}\,\sum_{i,j}
f^{ij}\frac{\partial \rho}{\partial x^j} \frac{\partial }
{\partial x^i}.
\end{alignat}
In particular,
\begin{equation}\label{eqn2.12}
\Delta f = n + \frac{n+2}{2\rho} \left\langle \nabla\rho,
\nabla f\right\rangle,
\end{equation}
and
\begin{equation}\label{eqn2.13}
\Delta \left(\sum_k x_k^2\right)= 2\sum_k f^{kk} + \frac{n+2}{2\rho}
\left\langle \nabla\rho, \nabla \sum_k
x_k^2\right\rangle,
\end{equation}
where $\langle\cdot, \cdot\rangle$ is with respect to the metric $G_f$. 
Similarly in terms of coordinates $\xi_1,...,\xi_n$, we have
\begin{equation*}
%\label{eqn2.14}
\Delta =
\sum_{i,j} u^{ij}\frac{\partial^2}{\partial \xi_i
\partial \xi_j} - \frac{n+2}{2\rho}\;\sum_{i,j}
u^{ij}\frac{\partial \rho}{\partial \xi_j} \frac{\partial }
{\partial \xi_i},
\end{equation*}
and hence,
\begin{equation*}
%\label{eqn2.15}
\Delta u = n - \frac{n+2}{2\rho}\left\langle \nabla\rho,
\nabla u\right\rangle,\end{equation*}
and
\begin{equation*}
%\label{eqn2.16}
\Delta \left(\sum_k \xi_k^2\right) = 2\sum_i u^{ii} -
\frac{n+2}{2\rho}
 \left\langle \nabla\rho,
\nabla \sum_k \xi_k^2\right\rangle,\end{equation*}
where $\langle\cdot, \cdot\rangle$ is with respect to the metric $G_u$.

\begin{lemma}\label{lemma_2.8} The Abreu's equation \eqref{eqn2.4}
%\begin{equation}\label{eqn2.17}
%\sum_{i,j}\frac{\partial^2 u^{ij}}{\partial \xi_i\partial \xi_j}=-A
%\end{equation}
is equivalent to any of the following two equations:
\begin{equation}\label{eqn2.18}
\sum_{i,j} f^{ij}\frac{\partial^{2}}{\partial x_{i}\partial x_{j}} \left(\log
\det\left(f_{kl}\right)\right)=-A,
\end{equation}
and
\begin{equation}\label{eqn2.19} \Delta
\rho=\frac{n+4}{2}\frac{\left\|\nabla\rho\right\|^{2}}{\rho}
+\frac{\rho A}{n+2}, \end{equation} where and later $\Delta$ and
$\|\cdot\|$ are with respect to the metric $G_f$.
\end{lemma}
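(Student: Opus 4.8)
The plan is to prove the two stated equivalences in turn, first translating the fourth-order Abreu operator from the $\xi$-coordinates into the $x$-coordinates via the Legendre transform to obtain \eqref{eqn2.18}, and then rewriting the outcome in terms of $\rho$ to obtain \eqref{eqn2.19}. Throughout I write $\partial_i$ for $\partial/\partial x_i$.

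For the equivalence of \eqref{eqn2.4} with \eqref{eqn2.18}, I would begin from the Legendre duality already recorded above: as matrix-valued functions one has $u^{ij}=f_{ij}$, and the change of variables $x\mapsto\xi=\nabla^f(x)$ has Jacobian $\partial\xi_i/\partial x_k=f_{ik}$ with inverse $\partial x_k/\partial\xi_i=u_{ki}=f^{ki}$, so that $\partial/\partial\xi_i=\sum_k f^{ki}\,\partial_k$. Applying this twice to $\sum_{i,j}\partial^2 u^{ij}/\partial\xi_i\partial\xi_j$, and using Jacobi's formula $\partial_j\log\det(f_{kl})=\sum_{p,q}f^{pq}f_{pqj}$ together with the inverse-matrix identity $\partial_m f^{lj}=-\sum_{p,q}f^{lp}f^{jq}f_{pqm}$, I would expand both that expression and $\sum_{i,j}f^{ij}\partial_i\partial_j\log\det(f_{kl})$. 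Each side then splits into a quartic term, which contracts the fully symmetric tensor $f_{ijkl}$ against two copies of $(f^{ij})$, and a cubic-cubic term, which contracts two copies of $f_{ijk}$ against three copies of $(f^{ij})$. The matching of the two sides is purely combinatorial: since the third and fourth derivatives of $f$ are fully symmetric and $(f^{ij})$ is symmetric, the two contraction patterns coincide, so the two operators agree and \eqref{eqn2.4} is equivalent to \eqref{eqn2.18}.

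For the equivalence of \eqref{eqn2.18} with \eqref{eqn2.19}, I would use $\log\det(f_{ij})=-(n+2)\log\rho$ from \eqref{eqn2.z} to rewrite \eqref{eqn2.18} as $(n+2)\sum_{i,j}f^{ij}\partial_i\partial_j(\log\rho)=A$. Expanding $\partial_i\partial_j\log\rho=\rho^{-1}\rho_{ij}-\rho^{-2}\rho_i\rho_j$ and invoking the formula \eqref{eqn2.11} in the form $\sum_{i,j}f^{ij}\rho_{ij}=\Delta\rho-\tfrac{n+2}{2\rho}\|\nabla\rho\|^2$, together with $\sum_{i,j}f^{ij}\rho_i\rho_j=\|\nabla\rho\|^2$, a short computation yields $\sum_{i,j}f^{ij}\partial_i\partial_j\log\rho=\rho^{-1}\Delta\rho-\tfrac{n+4}{2}\rho^{-2}\|\nabla\rho\|^2$. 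Substituting this and clearing denominators produces \eqref{eqn2.19} exactly.

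The main obstacle is the first equivalence: carrying the fourth-order operator through the Legendre change of variables generates several terms, and the crux is the index bookkeeping needed to verify that the two resulting cubic-cubic contractions, and likewise the two quartic contractions, are genuinely equal once the symmetry of the higher derivatives of $f$ is used. The second equivalence is essentially routine once \eqref{eqn2.11} is in hand.
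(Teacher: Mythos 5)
Your proposal is correct. For the equivalence of \eqref{eqn2.18} and \eqref{eqn2.19} your argument is exactly the paper's proof: rewrite $\log\det(f_{kl})=-(n+2)\log\rho$, expand $\partial_i\partial_j\log\rho$, and invoke \eqref{eqn2.11}, arriving at the coefficient $\tfrac{n+2}{2}+1=\tfrac{n+4}{2}$. The difference lies in the first equivalence: the paper dismisses \eqref{eqn2.4}$\Leftrightarrow$\eqref{eqn2.18} as well-known and gives no argument, whereas you supply a direct proof by pushing the fourth-order operator through the Legendre transform. Your expansion does close: both sides equal $\sum f^{ik}f^{jl}f_{ijkl}-\sum f^{ad}f^{be}f^{cf}f_{abc}f_{def}$. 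One caution, though: the phrase ``the two contraction patterns coincide by symmetry'' is too quick, since there are two inequivalent contractions of two symmetric $3$-tensors against three copies of $(f^{ij})$ (the fully crossed one above and the trace-type $\sum f^{ab}f^{cd}f^{ef}f_{abc}f_{def}$), and symmetry alone cannot rule out that the two sides produce different patterns; the index bookkeeping you flag really must be carried out, and it does succeed because both sides yield the fully crossed pattern. A shorter route to \eqref{eqn2.18}, avoiding all fourth-order bookkeeping, is to differentiate $\sum_k u^{ik}u_{kj}=\delta_{ij}$ to get $\sum_i\partial u^{ij}/\partial\xi_i=-\sum_q u^{jq}\,\partial\big(\log\det(u_{kl})\big)/\partial\xi_q$, then use $\log\det(u_{kl})=-\log\det(f_{kl})$ and the observation that $\sum_q u^{jq}\partial/\partial\xi_q=\partial/\partial x_j$ under the normal map; one further $\xi_j$-derivative and summation gives \eqref{eqn2.18} in two lines.
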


\begin{proof} The equivalence between
%\eqref{eqn2.17}
\eqref{eqn2.4} and \eqref{eqn2.18} is well-known.
We now prove the equivalence between \eqref{eqn2.18} and \eqref{eqn2.19}. Note that
$$-\frac{\partial^{2}}{\partial x_{i}\partial x_{j}}\left(\ln \det\left(f_{kl}\right)\right)=
(n+2)\left(\frac{\rho_{ij}}{\rho}-\frac{\rho_{i}\rho_{j}}{\rho^2}\right),$$
where $\rho_{i}=\frac{\partial\rho}{\partial x_{i}}$ and $\rho_{ij}=
\frac{\partial^{2}\rho}{\partial x_{i}\partial x_{j}}.$ It is easy to see
that \eqref{eqn2.18} is equivalent to
$$\frac{\rho A}{n+2}=\sum_{i,j} f^{ij}\rho_{ij}-\frac{1}{\rho}\sum_{i,j} f^{ij}\rho_{i}\rho_{j},$$
which is equivalent to \eqref{eqn2.19} by  \eqref{eqn2.11}.  \end{proof}

\section{Estimates of the Determinant}\label{Sec-Determinants}

Let $u\in \mc S$ be a solution of the Abreu's equation \eqref{eqn2.4}.
In this section, we derive a global upper bound of the determinant of the 
Hessian of $u$. Recall that the classes $\mc S$ and $\mc S_{p_o}$ were introduced in 
Section \ref{Sec-Intro}. 

%Then,
%\begin{equation}\label{eqn3.1} \Delta
%\rho=\frac{n+4}{2}\frac{\left\|\nabla\rho\right\|^{2}}{\rho}
%+\frac{\rho A}{n+2}. \end{equation}
%Fix a $p_o\in \Delta$.
%By adding a linear function, we assume that $u$ is normalized at $p_o$.
%We set
%$$\mc S_{p_o}=\left\{ u\in \mc S :\, u\geq u(p_o)=0\right\}.$$
The following two lemmas were proved by Donaldson \cite{D2}.
Refer to Theorem 5 and Theorem 6 \cite{D2}.

\begin{lemma}\label{lemma_3.1} Suppose that $u\in \mc S$ satisfies
the Abreu's equation \eqref{eqn2.4}. Then,
$$\det (u_{ij})\geq \mff C_1\quad\text{in }\Delta,$$
where $\mff C_1$ is a positive constant depending only on $n$, $\max_{ \bar\Delta}|A|$ and
$\mathrm{diam}(\Delta).$
\end{lemma}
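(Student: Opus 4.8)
The plan is to pass to the Legendre transform and convert the desired lower bound on $\det(u_{ij})$ into an upper bound for a positive solution of a \emph{linear} elliptic equation, which one then controls by combining the Guillemin boundary behavior with a global mass identity. Let $f=L(u)$, a smooth convex function on $\mathbb{R}^{n}$; since $u\in\mathcal S$ agrees with the Guillemin function \eqref{eqn2.1} up to a smooth term near $\partial\Delta$, the normal map $\nabla f$ is a diffeomorphism of $\mathbb{R}^{n}$ onto the polytope and $\det(u_{ij})=[\det(f_{ij})]^{-1}$. Thus, with $\rho$ as in \eqref{eqn2.z}, the quantity $Q:=\rho^{-(n+2)/2}=\sqrt{\det(f_{ij})}=[\det(u_{ij})]^{-1/2}$ must be bounded \emph{above}. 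Starting from \eqref{eqn2.19} in Lemma \ref{lemma_2.8} and writing $P=\log\rho$, a direct computation gives $\Delta P=\tfrac{n+2}{2}\|\nabla P\|^{2}+\tfrac{A}{n+2}$, so that the exponent $-(n+2)/2$ is exactly the one cancelling the gradient term, yielding
\[
\Delta Q+\tfrac12\,A\,Q=0 ,
\]
where $\Delta$ and $\|\cdot\|$ are with respect to $G_f$. Hence $Q>0$ solves a linear second-order elliptic equation with a bounded zeroth-order coefficient.

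Next I would record the two pieces of global information that come from the hypothesis $u\in\mathcal S$. First, the boundary behavior: near a facet $\{\delta_k=0\}$ the function \eqref{eqn2.1} has normal second derivative $\sim\delta_k^{-1}$ while the tangential ones stay bounded, so $\det(u_{ij})\to+\infty$ as $\xi\to\partial\Delta$; equivalently $Q\to0$ at infinity in $\mathbb{R}^{n}$. Therefore $Q$ attains its supremum at an interior point $p$, and it suffices to bound $Q(p)$. Second, the mass identity: since $\nabla f$ maps $\mathbb{R}^{n}$ diffeomorphically onto the polytope with Jacobian $\det(f_{ij})$,
\[
\int_{\mathbb{R}^{n}}\det(f_{ij})\,dx=\mathrm{vol}(\Delta)\le C_{n}\,\mathrm{diam}(\Delta)^{n},
\]
which, since $dV_{G_f}=\sqrt{\det(f_{ij})}\,dx=Q\,dx$, says precisely that $\int Q\,dV_{G_f}=\mathrm{vol}(\Delta)$. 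This is the step through which $\mathrm{diam}(\Delta)$ enters the final constant.

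Finally I would upgrade the integral bound to the pointwise bound $Q(p)\le C$. Since $Q$ is a positive solution of $\Delta Q+\tfrac12 AQ=0$ with $|A|\le\max_{\bar\Delta}|A|$, a mean-value (local maximum) estimate of De Giorgi--Nash--Moser type on a $G_f$-geodesic ball $B_r(p)$ with $r\sim(\max_{\bar\Delta}|A|)^{-1/2}$ should give
\[
Q(p)\le\sup_{B_{r/2}(p)}Q\le \frac{C}{\mathrm{vol}_{G_f}(B_r(p))}\int_{B_r(p)}Q\,dV_{G_f}\le\frac{C\,\mathrm{vol}(\Delta)}{\mathrm{vol}_{G_f}(B_r(p))},
\]
which bounds $Q(p)$, and hence gives $\det(u_{ij})\ge \mathsf C_1$, as soon as balls of the fixed radius $r$ carry a uniform lower volume bound. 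I expect this last point to be the main obstacle: the naive maximum principle is inconclusive, because at the interior maximum of $Q$ the equation only forces $A(p)\ge0$ and yields no quantitative bound, so the estimate must genuinely use the global mass $\mathrm{vol}(\Delta)$. Making the mean-value inequality effective therefore requires controlling the intrinsic geometry of $(\mathbb{R}^{n},G_f)$ -- equivalently, an interior Harnack/oscillation estimate for $\log\det(u_{ij})$ over unit $G_f$-balls -- which I would derive by Moser iteration from the equation for $Q$ together with the finiteness of $\int_{\mathbb{R}^n}\det(f_{ij})\,dx$; this iteration is the technical heart of the argument.
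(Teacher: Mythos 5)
Your reduction is correct as far as it goes: passing to the Legendre transform, the identity $\det(u_{ij})=[\det(f_{ij})]^{-1}$, the computation showing that $Q=\rho^{-(n+2)/2}=\sqrt{\det(f_{ij})}$ satisfies the linear equation $\Delta Q+\tfrac12 AQ=0$ (this is indeed an exact consequence of \eqref{eqn2.19}), the vanishing of $Q$ at infinity for $u\in\mc S$, and the mass identity $\int_{\mathbb R^n}\det(f_{ij})\,dx=\mathrm{vol}(\Delta)$ are all sound. The genuine gap is the final step, which you yourself flag as "the technical heart": upgrading the integral bound to $\sup Q\leq C$ by a De Giorgi--Nash--Moser mean-value inequality on $(\mathbb R^n,G_f)$. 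Such an inequality is not available here, and the route you propose to obtain it is circular. Moser iteration on geodesic balls requires uniform local geometric input (a Sobolev or Poincar\'e inequality with controlled constant, volume doubling, or a lower volume bound for balls of fixed radius), and for a Hessian metric $G_f$ this control is essentially \emph{equivalent} to the two-sided pinching of $\det(D^2u)$ that one is trying to prove; this is precisely why the Caffarelli--Guti\'errez theory invoked later in Section \ref{Sec-ProofMain} assumes the bounds \eqref{eqn4.3} rather than producing them. Concretely, the geometry is genuinely bad: for the Guillemin model $(\mathbb R^n,G_f)\cong(\Delta,G_v)$ the distance to a facet is $\int_0 \xi^{-1/2}\,d\xi<\infty$, so the manifold is incomplete with finite diameter and finite total volume; geodesic balls of any fixed radius $r\sim(\max_{\bar\Delta}|A|)^{-1/2}$ centered near infinity are not relatively compact in the manifold, and ball volumes degenerate there. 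Since you have no a priori control on where $\max Q$ is attained, the mean-value constant cannot be made uniform and the argument does not close.

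For comparison, the paper does not reprove this lemma but cites Donaldson (Theorem 5 of \cite{D2}), whose argument is a short auxiliary-function maximum principle carried out on $\Delta$ itself, avoiding all intrinsic geometry: writing $w=(\det(u_{ij}))^{-1}$, Abreu's equation \eqref{eqn2.4} takes the form $\sum U^{ij}w_{ij}=-A$, where $(U^{ij})$ is the (divergence-free) cofactor matrix of $(u_{ij})$, and $w\to 0$ on $\partial\Delta$ for $u\in\mc S$; one then maximizes $\log w+\lambda|\xi|^2$ with $\lambda\sim \mathrm{diam}(\Delta)^{-2}$, and at an interior maximum the first-order condition plus the arithmetic--geometric mean inequality $\mathrm{tr}(U)\geq n\,\det(u_{ij})^{(n-1)/n}$ yield $w\leq(4n^{-1}\max_{\bar\Delta}|A|\,\mathrm{diam}(\Delta)^2)^{n}$, which is exactly the constant $\mff C_1$ quoted after the lemma. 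Your diagnosis that the naive maximum principle applied to $Q$ alone only gives $A(p)\geq 0$ is accurate, but the correct remedy is an auxiliary multiplier or additive quadratic (the same device the paper uses on the Legendre side in Lemma \ref{lemma_3.4} for the \emph{upper} determinant bound), not heavy linear elliptic machinery; the diameter enters through the auxiliary quadratic, not through a volume identity.
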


In fact, we can take
$$\mff C_1 = ({4n\inv \max_{ \bar\Delta}|A|
\mathrm{diam}(\Delta)^2})^{-n}.$$
We point out that we can take $\alpha=0$ in Theorem 5 \cite{D2}.

\begin{lemma}\label{lemma_3.2}
Suppose that $u\in \mc S_{p_o}$ satisfies the Abreu's equation \eqref{eqn2.4}. Assume
that the section
$$\bar{S}_u(p_o,C)=\{\xi\in \Delta:\, u(\xi)\leq C\}$$
is compact and that there is a constant $b>0$ such that $$\sum
_{k=1}^n \left(\frac{\partial u}{\partial \xi_k}\right)^2 \leq b
\quad\text{on }\bar{S}_u(p_o,C).$$ Then,
$$\det (u_{ij})\leq \mff C_2\quad\text{in }S_u(p_o,C/2),$$
where $ \mff C_2$ is a positive constant depending on
$n$, $C$ and $b$.
\end{lemma}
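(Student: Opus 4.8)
The plan is to reduce the statement to an upper bound for $\rho=[\det(f_{ij})]^{-1/(n+2)}=[\det(u_{ij})]^{1/(n+2)}$ and to obtain that bound by the maximum principle applied to a single auxiliary function on the section. Throughout, $\Delta$ and $\langle\cdot,\cdot\rangle$ are taken with respect to the metric $G_u=G_f$, so all of the identities of Section~\ref{sec-Prliminaries} are available; in particular \eqref{eqn2.19} gives, after dividing by $\rho$ and subtracting $\|\nabla\rho\|^2/\rho^2$,
\[
\Delta\log\rho=\tfrac{n+2}{2}\Phi+\tfrac{A}{n+2},
\]
where $\Phi=\|\nabla\rho\|^2/\rho^2$ as in \eqref{eqn2.a}. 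Two further ingredients drive the estimate: the coercivity $\sum_k u_{kk}\ge n\rho^{(n+2)/n}$, which follows from AM--GM together with $\det(u_{ij})=\rho^{n+2}$ and enters through \eqref{eqn2.13} for $s:=\sum_k x_k^2$; and the hypothesis, which says exactly that $s=\sum_k(\partial u/\partial\xi_k)^2\le b$ on $\bar S_u(p_o,C)$.

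First I would record the one clean identity that makes everything go through. Writing $x_k=\partial u/\partial\xi_k=u_k$ and computing in the $\xi$-coordinates, where $g^{ij}=u^{ij}$, one finds $(\nabla s)^i=u^{ij}\,2\sum_k u_k u_{kj}=2u_i$, and hence
\[
\langle\nabla s,\nabla u\rangle=2\sum_i u_i\,\partial_i u=2\sum_i u_i^2=2s\le 2b .
\]
This replaces the cross term that would otherwise have to be absorbed by Cauchy--Schwarz. Next I introduce the auxiliary function $F=\log\rho+a\,s+\log(C-u)$ on $\bar S_u(p_o,C)$, with $a>0$ fixed (say $a=1$). Since $\bar S_u(p_o,C)\subset\subset\Delta$ is compact, $\rho$ and $s$ stay bounded there while $\log(C-u)\to-\infty$ as $u\to C$, so $F$ attains its maximum at an interior point $p^\ast$ with $C-u(p^\ast)>0$.

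At $p^\ast$ I would use $\nabla F=0$, i.e.\ $\nabla\log\rho=-a\nabla s+(C-u)^{-1}\nabla u$, and substitute this into $\Delta F\le 0$, where $\Delta F$ is assembled from the displayed formula above, from \eqref{eqn2.13}, and from $\Delta u=n-\frac{n+2}{2\rho}\langle\nabla\rho,\nabla u\rangle$. The quadratic gradient terms combine, the nonnegative term $(n+1)\|\nabla u\|^2/(C-u)^2$ can be discarded, and using $\langle\nabla s,\nabla u\rangle=2s\le 2b$ together with $\sum_k u_{kk}\ge n\rho^{(n+2)/n}$ one is left with a pointwise inequality of the shape
\[
2an\,\rho(p^\ast)^{(n+2)/n}\le \frac{K_1}{C-u(p^\ast)}+K_2 ,
\]
with $K_1,K_2$ depending only on $n$, $b$, $a$ and $\max_{\bar\Delta}|A|$.

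The last step is to feed this back into $F(p^\ast)=\log\rho(p^\ast)+a\,s(p^\ast)+\log(C-u(p^\ast))$. Taking logarithms in the displayed bound gives $\log\rho(p^\ast)\le\frac{n}{n+2}\log\!\big(K_3/(C-u(p^\ast))\big)$ for a suitable $K_3$, and because the exponent $\tfrac{n}{n+2}<1$ is strictly less than the coefficient $1$ of $\log(C-u(p^\ast))$ in $F$, the possible blow-up of $(C-u(p^\ast))^{-1}$ is dominated; a one-variable check in $t=C-u(p^\ast)\in(0,C]$ then yields $F(p^\ast)\le M$ with $M=M(n,b,C,\max_{\bar\Delta}|A|)$. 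Since $s\ge0$ and $C-u\ge C/2$ on $S_u(p_o,C/2)$, this gives $\log\rho\le M-\log(C/2)$ there, i.e.\ $\det(u_{ij})=\rho^{n+2}\le \mff C_2$. The main obstacle is precisely this final balancing: at the interior maximum $C-u(p^\ast)$ need not be bounded below, so the coercive term must be shown to lose to the logarithmic localizer on the log scale. The identity $\langle\nabla s,\nabla u\rangle=2s$ and the sub-unit exponent $\tfrac{n}{n+2}$ are exactly what make this succeed, and in every dimension.
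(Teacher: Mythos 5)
Your proof is correct, but there is nothing in the paper to compare it against line by line: the paper does not prove Lemma \ref{lemma_3.2} at all; it quotes it as Theorem 6 of Donaldson \cite{D2} and uses it as a black box. What you supply is a self-contained proof, and your route is essentially the paper's own method for its new result, Lemma \ref{lemma_3.4}, transplanted from the global setting to a section: a Pogorelov-type maximum principle for an auxiliary function combining $\log\rho$, the gradient term $s=\sum_k x_k^2$, and a localizer, run with the identities \eqref{eqn2.12}, \eqref{eqn2.13} and \eqref{eqn2.19} carried over to $\xi$-coordinates by the isometry $\nabla^f$. I verified the computation: at an interior maximum $p^\ast$ of $F=\log\rho+a\,s+\log(C-u)$ (which exists since $\log(C-u)\to-\infty$ as $u\to C$ while $\rho$, $s$ are bounded on the compact section), the first-order condition makes the $a^2\|\nabla s\|^2$ contributions of $\tfrac{n+2}{2}\Phi$ and of $a\,\Delta s$ cancel exactly, thanks to your identity $(\nabla s)^i=2u_i$, i.e.\ $\langle\nabla s,\nabla u\rangle=2s$; what remains is $2a\sum u_{kk}+(n+1)\|\nabla u\|^2/(C-u)^2-2(n+2)a\,s/(C-u)-n/(C-u)+A/(n+2)\le 0$, and with $s\le b$, $\sum u_{kk}\ge n\rho^{(n+2)/n}$ and $C-u\le C$ this gives $\rho(p^\ast)\le\bigl(K_3/(C-u(p^\ast))\bigr)^{n/(n+2)}$, so the exponent $n/(n+2)<1$ lets the localizer dominate, $F(p^\ast)$ is bounded, and the bound on $\det(u_{ij})$ follows on $S_u(p_o,C/2)$ where $C-u\ge C/2$ and $s\ge 0$. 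Relative to the paper, your argument buys two things: it makes the determinant upper bound self-contained rather than cited, and the exact cancellation replaces the Schwarz-inequality absorption \eqref{eqn3.7} and the smallness requirement $10\varepsilon b\le 1$ needed in the proof of Lemma \ref{lemma_3.4}. One caveat: your constant $\mff C_2$ also depends on $\max_{\bar\Delta}|A|$ (through the $A/(n+2)$ term), a dependence omitted in the paper's wording of Lemma \ref{lemma_3.2} but explicit in Lemma \ref{lemma_3.1} and harmless in the only place the lemma is applied (Theorem \ref{theorem_4.1}, where $A^{(k)}\to A$ smoothly, so $\max_{\bar\Delta}|A^{(k)}|$ is uniformly bounded); you should state this dependence rather than silently match the paper's list of constants.
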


Here, we write Theorem 6 \cite{D2} in the form of Lemma \ref{lemma_3.2}
for convenience of applications in this paper.

We point out that Theorem 6 \cite{D2} holds for all dimensions. However,
in Donaldson's application of this result,
%in deriving interior estimates on all derivatives
%of solutions $u$ of the Abreu's equation,
additional information on ``modulus of convexity" of $u$ is required.
Such a modulus of convexity was verified only for the 2-dimensional case.
Refer to Section 5 \cite{D2} for details. It is not clear whether the required
modulus of convexity holds for higher dimensions.

In the following, we derive a global estimate for the upper bound of $\det (D^2u)$,
which plays a key role in this paper. This new upper bound relates to the Legendre
transforms of solutions.

For any point $p$ on $\partial \Delta$, there is an affine coordinate
$\{\xi_1,..., \xi_n\}$, such that, for some $1\leq m \leq n$,
a neighborhood $U\subset \bar\Delta$ of $p$  is defined
by $m$ inequalities
$$\xi_1\ge 0,\quad ...,\quad \xi_m\geq 0,$$
with  $\xi(p)=0.$   Then, $v$ in \eqref{eqn2.1} has the form 
$$v=\sum_{i=1}^{m}\xi_i\log \xi_i+\alpha(\xi),$$
where $\alpha$ is a smooth function in $\bar U$.
By Proposition 2 in \cite{D2},  we have the following result.

\begin{lemma}\label{lemma_3.3} There holds
$$\det(v_{ij})= \big[\xi_1\xi_2 ...\xi_m \beta(\xi)\big]^{-1}\quad\text{in }\Delta,$$
where $\beta(\xi)$ is smooth up to the boundary and $\beta(0)=1$.
\end{lemma}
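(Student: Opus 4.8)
The plan is to compute the Hessian of $v$ directly from the given local form and then reduce its determinant to a single explicit singular factor times a smooth, non-vanishing function. First I would fix the affine coordinates by taking $\xi_i=\delta_{k_i}$ for the $m$ facets $\{\xi_i=0\}$ passing through $p$, which puts $v$ in the stated form $v=\sum_{i=1}^m\xi_i\log\xi_i+\alpha$ with $\alpha\in C^\infty(\bar U)$. Differentiating twice gives the block structure $v_{ij}=\xi_i^{-1}$ for $i=j\le m$ and $v_{ij}=\alpha_{ij}$ otherwise; equivalently $(v_{ij})=D+H$, where $D=\mathrm{diag}(\xi_1^{-1},\dots,\xi_m^{-1},0,\dots,0)$ and $H=(\alpha_{ij})$ is smooth up to the boundary. (The same expression also follows from $v_{ij}=\sum_k\delta_k^{-1}(\partial_i\delta_k)(\partial_j\delta_k)$, which additionally shows that $(v_{ij})$ is positive definite, since the outer normals of $\Delta$ span $\real^n$.)

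Next I would expand $\det(D+H)$ by multilinearity in the first $m$ columns, writing the $j$-th column as $\xi_j^{-1}e_j$ plus the $j$-th column of $H$ for $j\le m$. Expanding each of the resulting standard-basis columns by cofactors (each $e_j$ contributes the principal deletion of row and column $j$, with sign $+1$) yields
\[
\det(v_{ij})=\frac{1}{\xi_1\cdots\xi_m}\,Q(\xi),\qquad
Q(\xi)=\sum_{S\subseteq\{1,\dots,m\}}\Big(\prod_{i\in\{1,\dots,m\}\setminus S}\xi_i\Big)\,\det\big(H_{\widehat S}\big),
\]
where $H_{\widehat S}$ is the principal submatrix of $H$ obtained by deleting the rows and columns indexed by $S$. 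Since each $\det(H_{\widehat S})$ is smooth on $\bar U$, the function $Q$ is smooth up to the boundary, and I would set $\beta=Q^{-1}$, so that $\det(v_{ij})=[\xi_1\cdots\xi_m\,\beta]^{-1}$ as claimed.

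It remains to show $\beta(0)=1$, i.e. $Q(0)=1$, which is the only non-formal point. At $\xi=0$ every term of $Q$ with $S\ne\{1,\dots,m\}$ carries a positive power of some $\xi_i$ and hence vanishes, so $Q(0)=\det\big(\alpha_{ij}(0)\big)_{m<i,j\le n}$, the determinant of the Hessian of $v$ restricted to the face $F=\{\xi_1=\cdots=\xi_m=0\}$ at the relatively interior point $p$. This restriction is smooth and strictly convex near $p$ (it is the Guillemin function of the lower-dimensional polytope $F$, whose singularities lie on $\partial F$), so the block $\big(\alpha_{ij}(0)\big)_{m<i,j\le n}$ is positive definite. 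I would then use the remaining freedom to choose the face coordinates $\xi_{m+1},\dots,\xi_n$ by a linear change—which preserves both the inequalities $\xi_i\ge0$ and the singular part $\sum_{i\le m}\xi_i\log\xi_i$—to normalize this block to the identity, giving $Q(0)=1$, hence $\beta(0)=1$ and $\beta>0$ near $p$.

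The \emph{main obstacle} is precisely this last normalization: identifying $Q(0)$ with the Hessian of $v$ along the face and arranging $\beta(0)=1$ uses the structure of $v$ transverse to and along faces, which is exactly the content recorded in Proposition 2 of \cite{D2}. The determinant expansion in the second step is routine linear algebra, and the smoothness and positivity of $\beta$ up to the boundary are then immediate from the smoothness of $H$ and $Q(0)=1>0$.
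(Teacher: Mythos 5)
Your proof is correct, and it takes a genuinely different route from the paper, which offers no argument for Lemma \ref{lemma_3.3} at all: it simply invokes Proposition 2 of \cite{D2}. Your argument is self-contained: the column expansion $\det(D+H)=\sum_{S\subseteq\{1,\dots,m\}}\bigl(\prod_{i\in S}\xi_i^{-1}\bigr)\det\bigl(H_{\widehat S}\bigr)$ is correct linear algebra (all cofactor signs are $+1$ since only principal deletions occur), it exhibits $Q=\xi_1\cdots\xi_m\det(v_{ij})$ as smooth up to the boundary, and the identification $Q(0)=\det\bigl(\alpha_{ij}(0)\bigr)_{m<i,j\le n}$ together with the strict convexity of $v$ restricted to the face gives $Q(0)>0$, hence $\beta=Q^{-1}$ is smooth and positive near $p$. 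Moreover, your proof surfaces a point that the bare citation hides: $\beta(0)=1$ is \emph{not} automatic for an arbitrary coordinate system of the kind described before the lemma --- for the unit square at the midpoint of an edge one gets $m=1$ and $Q(0)=\alpha_{22}(0)=4$, so $\beta(0)=1/4$ --- and it genuinely requires the residual freedom of a linear substitution $C$ in the face variables $\xi_{m+1},\dots,\xi_n$, which preserves the inequalities $\xi_i\ge 0$ and the singular part $\sum_{i\le m}\xi_i\log\xi_i$, rescales $\det(v_{ij})$ by $(\det C)^{-2}$, and conjugates the face block to $(C^{-1})^{t}\bigl(\alpha_{ij}\bigr)C^{-1}$; taking $C$ to be the positive square root of the face block gives $Q(0)=1$. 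So the lemma holds in the reading ``the coordinates can be chosen so that $\beta(0)=1$,'' which is your reading and the only consistent one. (At a vertex, where $m=n$, the empty-determinant convention gives $Q(0)=1$ with no normalization needed.) None of this costs the paper anything, since the derivation of \eqref{eqn3.2} uses only that $\beta$ is smooth and bounded below by a positive constant near each boundary point: the citation buys brevity, while your computation buys an elementary, dimension-free proof and an explicit clarification of what the normalization $\beta(0)=1$ actually means.
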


Denote by $d_E(p,\partial \Delta)$ the Euclidean distance from $p$ to $\partial \Delta$.
By Lemma \ref{lemma_3.3}, we have
\begin{equation}\label{eqn3.2}
\det(v_{ij})\leq \frac{C}{[d_E(p,\partial \Delta)]^n}\quad\text{in  } \Delta,
\end{equation}
where $C$ is a positive constant.

Recall that $p_o\in\Delta$ is the point we fixed for $\mc S_{p_o}$. 
Now we choose coordinates $\xi_1,...,\xi_n$ such that $\xi(p_o)=0$. Set
$$x_i=\frac{\partial u}{\partial \xi_i},\;\;\; f=\sum_i x_i\xi_i - u.$$

\begin{lemma}\label{lemma_3.4}
Suppose that $u\in \mc S_{p_o}$ satisfies the Abreu's equation \eqref{eqn2.4}.
Assume, for some positive constants $d$ and $b$,
$$\frac{1+\sum x_i^2}{(d + f)^2}\leq b\quad\text{in }\mathbb R^n.$$
Then,
$$\exp\left\{ -\mff C_3 f \right\}\frac{\det (u_{ij})}{\left(d+f\right)^{2n}}\leq \mff C_4
\quad\text{in }\Delta,$$
where $\mff C_3$ is a positive constant depending only on $n$ and $\Delta$,
and $\mff C_4$ is a positive constant depending only on 
$n$, $d$, $b$ and $\max_{ \bar\Delta}|A|$. 
\end{lemma}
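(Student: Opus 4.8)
The plan is to work with the affine normalization rather than with $u$ directly. Since $\det(f_{ij})=[\det(u_{ij})]^{-1}$ and $\rho=[\det(f_{ij})]^{-1/(n+2)}$, we have $\det(u_{ij})=\rho^{n+2}$, so the asserted inequality is equivalent to an upper bound for $\rho$, namely $\log\rho\le\frac{2n}{n+2}\log(d+f)+\frac{\mff C_3}{n+2}f+\text{const}$. I would therefore introduce the test function
\[
G=\log\rho-\frac{2n}{n+2}\log(d+f)-\frac{\mff C_3}{n+2}\,f
\]
on $\nabla^f(\Delta)\subset\real^n$ and bound $\sup G$ from above, with $\mff C_4$ emerging as $\exp\{(n+2)\sup G\}$ up to a harmless factor. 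All computations are carried out for the Laplace--Beltrami operator $\Delta$ of $G_f$, for which the formulas \eqref{eqn2.12}, \eqref{eqn2.13} and \eqref{eqn2.19} are available.

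First I would verify that $\sup G$ is attained at an interior point. As $|x|\to\infty$ the dual point $\xi=\nabla f(x)$ tends to $\partial\Delta$; by Lemma \ref{lemma_3.3} and \eqref{eqn3.2} the determinant $\det(u_{ij})$, hence $\rho$, blows up at a rate controlled by the product of the distances to the facets, whereas $f$ grows like a positive multiple of $\log(1/\mathrm{dist})$ along the conormal directions, the multiple being $\delta_k(p_o)$ near the $k$-th facet. Comparing these rates shows that if $\mff C_3$ is taken larger than $1/\min_k\delta_k(p_o)$ — a quantity depending only on $n$ and $\Delta$, which is exactly the asserted dependence of $\mff C_3$ — then $G\to-\infty$, so the supremum is achieved at some interior $x^\ast$.

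At $x^\ast$ I would use $\nabla G=0$ and $\Delta G\le0$. Setting $P=\nabla\rho/\rho$, the first condition gives $P=\left(\frac{2n}{n+2}\frac{1}{d+f}+\frac{\mff C_3}{n+2}\right)\nabla f$, so $\nabla\rho$ is parallel to $\nabla f$. Substituting \eqref{eqn2.19} for $\Delta\rho/\rho$ and \eqref{eqn2.12} for $\Delta f$ into the expansion of $\Delta G$, the second-order quantity $\frac{n+2}{2}\Phi=\frac{n+2}{2}\|P\|^2$ contributed by $\log\rho$ cancels exactly against the first-order terms produced by $f$ and $\log(d+f)$. This cancellation is the analytic point of the affine normalization $\rho=[\det(f_{ij})]^{-1/(n+2)}$, and after it $\Delta G\le0$ collapses to a pointwise inequality in which only a first-order term of the shape $\|\nabla f\|^2/(d+f)^2$ survives.

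The main obstacle is to control this surviving term and convert the differential inequality into a genuine bound on $\rho(x^\ast)$, since a naive maximum principle leaves it with the unfavorable sign. This is where the hypothesis $\frac{1+\sum x_i^2}{(d+f)^2}\le b$ is indispensable: it expresses the coercivity of the Legendre transform $f$, and it is this coercivity that substitutes for the ``modulus of convexity'' of $u$ unavailable when $n\ge3$. The plan is to feed the hypothesis into the inequality at $x^\ast$ — using $\|\nabla f\|^2=\sum f^{ij}\xi_i\xi_j$, the boundedness of $\xi$ on $\Delta$, the bound $\sum f^{kk}\ge n\rho^{(n+2)/n}$ underlying \eqref{eqn2.13} and the arithmetic--geometric mean inequality — so as to absorb the gradient term and bound $\rho(x^\ast)$, which yields $\sup G\le C(n,d,b,\max_{\bar\Delta}|A|)$ and hence the claimed $\mff C_4$. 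An equivalent route, which exploits the hypothesis more transparently, is to run the whole argument with $\log(1+\sum x_i^2)$ in place of $\log(d+f)$ and convert back at the end via the hypothesis. I expect this absorption step, together with the quantitative growth comparison that forces the maximum to be interior, to be the delicate part of the proof.
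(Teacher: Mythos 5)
Your overall skeleton (pass to the Legendre transform, run a maximum principle on a test function built from $\log\rho$, $\log(d+f)$ and $f$, force an interior maximum via the boundary decay of $\det(u_{ij})e^{-C_1 f}$ as in \eqref{eqn3.a}, finish with the arithmetic--geometric mean inequality) is the same as the paper's, and your observation that the $\Phi$-terms cancel exactly at the critical point is correct. But the proof collapses precisely at the step you flag as the ``main obstacle,'' and neither of your proposed remedies can repair it. With your test function $G=\log\rho-\frac{2n}{n+2}\log(d+f)-\frac{\mff C_3}{n+2}f$, substituting the first-order condition $\nabla\rho/\rho=c\,\nabla f$, $c=\frac{2n}{(n+2)(d+f)}+\frac{\mff C_3}{n+2}$, into $\Delta G\le 0$ and using \eqref{eqn2.12} and \eqref{eqn2.19} leaves exactly
\[
\frac{2n}{n+2}\frac{\|\nabla f\|^2}{(d+f)^2}+\frac{A}{n+2}-\frac{2n^2}{(n+2)(d+f)}-\frac{n\,\mff C_3}{n+2}\le 0 .
\]
Every occurrence of $\rho$ has cancelled: this inequality only says that $\|\nabla f\|^2/(d+f)^2$ is bounded at $x^\ast$ and carries no information about $\rho(x^\ast)$, hence none about $\sup G$. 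There is nothing to ``absorb'': the surviving gradient term sits on the small side of the inequality, and the quantity to be bounded simply does not appear. Your idea of trading $\|\nabla f\|^2=\sum f^{ij}\xi_i\xi_j$ for $\sum f^{ii}$ fails in both directions: an upper bound $\sum f^{ij}\xi_i\xi_j\le \mathrm{diam}(\Delta)^2\sum f^{ii}$ is useless on that side of the inequality, while a lower bound $\sum f^{ij}\xi_i\xi_j\ge c\sum f^{ii}$ is false, since $\xi=\nabla f(x^\ast)$ may lie arbitrarily close to $\xi(p_o)=0$, making $\|\nabla f\|$ tiny no matter how large $\det(u_{ij})$ is.

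The missing idea is an \emph{extra, positively-signed} term in the test function. The paper maximizes
\[
F=\exp\Big\{-C_2 f+\varepsilon\,\frac{1+\sum x_i^2}{(d+f)^2}\Big\}\,\frac{\rho}{(d+f)^{2n/(n+2)}},
\]
where the added term is bounded between $0$ and $\varepsilon b$ by your hypothesis (hence harmless in the final estimate), but whose Laplacian, via \eqref{eqn2.13}, injects $+2\varepsilon\frac{\sum f^{ii}}{(d+f)^2}$ with the favorable sign into $\Delta\log F\le 0$. Choosing $\varepsilon$ with $10\varepsilon b\le 1$ lets the Schwarz inequality absorb the cross and gradient terms this extra factor generates, and the maximum principle then yields $\varepsilon\frac{\sum f^{ii}}{(d+f)^2}\le C$ at $x^\ast$; the arithmetic--geometric mean inequality, in the form $\rho^{(n+2)/n}=\det(f^{ij})^{1/n}\le\frac{1}{n}\sum f^{ii}$, converts this into the desired bound on $\rho/(d+f)^{2n/(n+2)}$, hence on $F(x^\ast)$. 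Note finally that your fallback of replacing $\log(d+f)$ by $\log(1+\sum x_i^2)$ also fails: since that term is \emph{subtracted} in the test function, its Laplacian enters with a minus sign and the maximum principle produces only a \emph{lower} bound for $\sum f^{ii}/(1+\sum x_i^2)$ --- again no upper bound on $\rho$. The hypothesis must enter through an additive, bounded, positively-signed term in the exponent, not through the normalizing denominator.
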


\begin{proof}
Let $v$ be given as in \eqref{eqn2.1}. 
By adding a linear function, we assume that $v$ is also normalized at $p_o$.
Denote $g=L(v)$. By \eqref{eqn3.2}, it is straightforward to check that
there exists a positive constant $C_1$ such that
$$\det(v_{ij})e^{-C_1g}\to 0\quad\text{as }p\to \partial \Delta.$$
Since $u=v+\phi $ for some $\phi\in C^\infty(\bar \Delta)$, then
\begin{equation}\label{eqn3.a}\det(u_{ij})e^{-C_1 f }\to 0\quad\text{as }p\to \partial \Delta.
\end{equation}
Consider, for some constant $\varepsilon$ to be determined,
$$F=\exp\left\{ -C_2 f + \varepsilon\frac{1+\sum x_i^2}{(d + f)^2}\right\}
\frac{\rho}{\left(d+f\right)^{\frac{2n}{n+2}}},$$
where $C_2=\frac{C_1}{n+2}$ and $\rho$ is defined in \eqref{eqn2.z}. 
By \eqref{eqn3.a}, $F\to 0$ as $p\in\partial\Delta$. 
Assume $F$ attains its maximum at an interior point $p^*$.
Then at $p^*$, we have
\begin{equation}\label{eqn3.3}
\frac{\rho_{,i}}{\rho} - C_2f_{,i}-\frac{2n}{n+2}\frac{f_{,i}}{d+f}
+ \varepsilon\frac{1+\sum x_i^2}{(d + f)^2}\left[\frac{(\sum x_i^2)_{,i}}{1+\sum x_i^2}-
2\frac{f_{,i}}{d + f}\right]=0,\end{equation}
and
\begin{align}\label{eqn3.4}\begin{split}
&\frac{n+2}{2}\Phi + \frac{A}{n+2}- C_2\Delta f - \frac{2n}{n+2}\frac{\Delta f}{d+f}
+ \frac{2n}{n+2}\frac{||\nabla f||^2}{(d+f)^2}\\
&\quad+\varepsilon\frac{1+\sum x_i^2}{(d + f)^2}
\bigg[\frac{\Delta(\sum x_i^2)}{1+\sum x_i^2}-\frac{||\nabla\sum x_i^2||^2}{(1+\sum x_i^2)^2}
-\frac{2\Delta f}{d + f} + \frac{2||\nabla f||^2}{(d + f)^2}\bigg]\\
&\quad+\varepsilon\frac{1+\sum x_i^2}{(d + f)^2}\left(\frac{(\sum x_i^2)_{,i}}{1+\sum x_i^2}-
\frac{2f_{,i}}{d + f}\right)^2
\leq 0,\end{split}\end{align}
where we used \eqref{eqn2.19} 
for $\Delta\rho$. 
By \eqref{eqn2.12} and \eqref{eqn2.13}, we get
\begin{align}\label{eqn3.5}\begin{split}
&\varepsilon\frac{1+\sum x_i^2}{(d + f)^2}
\bigg[\frac{2\sum f^{ii}}{1+\sum x_i^2}-\frac{4\langle\nabla \sum x_i^2,\nabla f\rangle}{(1+\sum x_i^2)(d+f)}
-\frac{2n}{d + f} + \frac{6||\nabla f||^2}{(d + f)^2}\bigg]\\
&\,
-\frac{n+2}{2}\sum \frac{\rho_{,i}}{\rho}\bigg[C_2f_{,i}+\frac{2n}{n+2}\frac{f_{,i}}{d+f}\\
&\qquad\qquad\qquad\qquad
- \varepsilon\frac{1+\sum x_i^2}{(d + f)^2}\left(\frac{(\sum x_i^2)_{,i}}{1+\sum x_i^2}-
2\frac{f_{,i}}{d + f}\right)\bigg]\\
&\,
+\frac{n+2}{2}\Phi+ \frac{2n}{n+2}\frac{||\nabla f||^2}{(d+f)^2}- \frac{2n^2}{n+2}\frac{1}{d+f}+\frac{A}{n+2}
- C_2n\leq 0.\end{split}\end{align}
By inserting \eqref{eqn3.3} into \eqref{eqn3.5} and by the definition of $\Phi$ in \eqref{eqn2.a}, we obtain
\begin{align}\label{eqn3.6}\begin{split}&\varepsilon\frac{1+\sum x_i^2}{(d + f)^2}
\bigg[\frac{2\sum f^{ii}}{1+\sum x_i^2}-\frac{4\langle\nabla \sum x_i^2,\nabla f\rangle}{(1+\sum x_i^2)(d+f)}
-\frac{2n}{d + f} + \frac{6||\nabla f||^2}{(d + f)^2}\bigg]\\
&\quad
+ \frac{2n}{n+2}\frac{||\nabla f||^2}{(d+f)^2}- \frac{2n^2}{n+2}\frac{1}{d+f}
+\frac{A}{n+2}- C_2n\leq 0.\end{split}\end{align}
By the Schwarz inequality, we have
\begin{equation*}
\left|\frac{4\langle\nabla \sum x_i^2,\nabla f\rangle}{(1+\sum x_i^2)(d+f)}\right|
\leq  \frac{||\nabla\sum x_i^2||^2}{4(1+\sum x_i^2)^2} + \frac{16||\nabla f||^2}{(d + f)^2}.\end{equation*}
Hence, 
\begin{equation}\label{eqn3.7}
\left|\frac{4\langle\nabla \sum x_i^2,\nabla f\rangle}{(1+\sum x_i^2)(d+f)}\right|
\leq  \frac{\sum f^{ii}}{(1+\sum x_i^2)^2} + \frac{16||\nabla f||^2}{(d + f)^2}.\end{equation}
Combining \eqref{eqn3.6} and \eqref{eqn3.7} yields
\begin{align*}&\varepsilon\frac{1+\sum x_i^2}{(d + f)^2}
\bigg[\frac{\sum f^{ii}}{1+\sum x_i^2}
-\frac{2n}{d + f} - \frac{10||\nabla f||^2}{(d + f)^2}\bigg]\\
&\quad
+ \frac{2n}{n+2}\frac{||\nabla f||^2}{(d+f)^2}- \frac{2n^2}{n+2}\frac{1}{d+f}
+\frac{A}{n+2}- C_2n\leq 0.\end{align*}
By choosing $\varepsilon>0$ such that $10\varepsilon b\leq 1$, we have 
$$\varepsilon\frac{\sum f^{ii}}{(d + f)^2}+ \frac{A}{n+2}- C_3\leq 0.$$
By the relation between the geometric mean and the arithmetic mean, we get 
$$\frac{\rho}{\left(d+f\right)^{\frac{2n}{n+2}}} 
=\frac{(\det(f^{ij}))^{\frac{1}{n+2}}}{\left(d+f\right)^{\frac{2n}{n+2}}}\leq C_4.$$
Therefore, $F(p^*)\le C_5$, and hence $F\le C_5$ everywhere. The definition of $F$ implies 
$$\exp\left\{ -C_2 f \right\}\frac{\rho}{\left(d+f\right)^{\frac{2n}{n+2}}}\leq C_5.$$
%We obtain
%$$\exp\left\{ -\mff C_3 f \right\}\frac{\det (D^2u)}{\left(d+f\right)^{2n}}\leq \mff C_4.$$
This is the desired estimate. \end{proof}

\section{Proof of the Main Theorem}\label{Sec-ProofMain}

In this section, we prove Theorem \ref{theorem_1.3}.
We first introduce a notation.

For any $u\in \mc C_{p_0}$,  we set
\begin{equation}\label{eqn2.7}
 \|u\|_b=\int_{\partial \Delta} ud\sigma.
\end{equation}
%Here,  we treat it as a norm of $u$, where $b$ stands for boundary.
An important consequence of
%Theorem \ref{theorem_2.5}
the uniform $K$-stability is the following result.
See Corollary 2  \cite{D2}.

\begin{lemma}\label{prop_2.7}
Suppose $(\Delta,A)$ is uniformly $K$-stable
and $u\in \mc S_{p_o}$ is a solution of the Abreu's equation \eqref{eqn2.4}. Then,
$$
\|u\|_b\leq C,
$$
where $C$ is a positive constant depending only on $n$ and
$\lambda$.
\end{lemma}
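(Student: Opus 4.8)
The plan is to show that for a solution the functional $\mc L_A$ takes a fixed value, and then to invoke the stability inequality \eqref{eqn2.6} directly. Concretely, I claim that any $u\in\mc S_{p_o}$ solving \eqref{eqn2.4} satisfies
$$\mc L_A(u)=n\,\mathrm{vol}(\Delta).$$
Granting this, uniform $K$-stability yields $\mc L_A(u)\ge\lambda\|u\|_b$, whence
$$\|u\|_b\le \frac{\mc L_A(u)}{\lambda}=\frac{n\,\mathrm{vol}(\Delta)}{\lambda},$$
which is the assertion with $C=n\,\mathrm{vol}(\Delta)/\lambda$ (the volume of $\Delta$ being fixed geometric data absorbed into the dependence on $\Delta$).

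To prove the identity I would test the equation against $u$ itself. Multiplying \eqref{eqn2.4} by $u$ and integrating over $\Delta$ gives
$$\int_\Delta u\sum_{i,j}\frac{\partial^2 u^{ij}}{\partial\xi_i\partial\xi_j}\,d\mu=-\int_\Delta Au\,d\mu.$$
The essential input is Donaldson's integration-by-parts formula on the class $\mc S$: for $u\in\mc S$ the Guillemin boundary behavior is exactly what makes the boundary measure $d\sigma$ appear with no spurious contributions, so that
$$\int_\Delta \psi\sum_{i,j}\frac{\partial^2 u^{ij}}{\partial\xi_i\partial\xi_j}\,d\mu=\int_\Delta\sum_{i,j}u^{ij}\psi_{ij}\,d\mu-\int_{\partial\Delta}\psi\,d\sigma$$
for admissible $\psi$ (this is precisely the identity that turns the first variation of $\mc F_A$ into the Abreu's equation). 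Taking $\psi=u$ and using the trace identity $\sum_{i,j}u^{ij}u_{ij}=n$, the left-hand side becomes $n\,\mathrm{vol}(\Delta)-\|u\|_b$. Comparing with the previous display yields $\|u\|_b-\int_\Delta Au\,d\mu=n\,\mathrm{vol}(\Delta)$, that is, $\mc L_A(u)=n\,\mathrm{vol}(\Delta)$ by \eqref{eqn2.3}.

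The main obstacle is the rigorous justification of the integration by parts with the choice $\psi=u$. One must check that all integrals converge and that the boundary term is exactly $\int_{\partial\Delta}u\,d\sigma$, despite $\nabla u$ blowing up along $\partial\Delta$; this is controlled by the membership $u\in\mc S$ (so that $u-v$ is smooth up to $\bar\Delta$ and $u\in C(\bar\Delta)$, making $\int_\Delta Au\,d\mu$ finite) together with the precise definition of the boundary measure $d\sigma$ tailored to the Guillemin singularities. This is exactly the content of Corollary 2 \cite{D2}, on which I would rely for the boundary analysis; the remaining steps are the elementary trace identity $\sum_{i,j}u^{ij}u_{ij}=n$ and the definitions \eqref{eqn2.2}--\eqref{eqn2.3} of $\mc F_A$ and $\mc L_A$, after which the stability inequality \eqref{eqn2.6} closes the argument immediately.
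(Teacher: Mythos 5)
Your proposal is correct and takes essentially the same route as the paper: the paper cites Corollary 2 of Donaldson \cite{D2} for this lemma, and its own proof of Theorem \ref{theorem_4.1} reproduces exactly your computation, namely $\mathcal L_{A}(u)=\int_\Delta \sum_{i,j}u^{ij}u_{ij}\,d\mu=n\,\mathrm{Area}(\Delta)$ via the Donaldson integration-by-parts identity and the trace identity, followed by the stability inequality \eqref{eqn2.6} to get $\|u\|_b\le \lambda^{-1}n\,\mathrm{Area}(\Delta)$. The only cosmetic remark is that your constant (like the paper's) also depends on the fixed polytope through $\mathrm{Area}(\Delta)$, and that the rigorous justification of the boundary term for $u\in\mathcal S$ is the integration-by-parts lemma in \cite{D2} preceding Corollary 2, rather than Corollary 2 itself.
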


Donaldson pointed out that Lemma \ref{prop_2.7} implies
interior gradient estimates of solutions of the Abreu's equation.
See Corollary 3  \cite{D2}.
As a consequence, a sequence of normalized solutions $\{u^{(k)}\}\subset
{\mc S}_{p_o}$
with uniformly bounded $\|u^{(k)}\|_b$
is locally uniformly convergent to a convex function $u$ in $\Delta$.
(See Section 5 \cite{D1})

%({\bf Is this result needed later?})

%These results are not directly related to the existence
%of extremal metrics on toric varieties.

Now, we are ready to prove the following result.

\begin{theorem}\label{theorem_4.1}
Suppose that $(\Delta,A)$ is  uniformly $K$-stable and that
$\{A^{(k)}\}$ is a sequence of smooth functions in $\bar\Delta$ such that
$A^{(k)}$ converges to $A$ smoothly in $\bar \Delta$.
Assume $u^{(k)}\in \mc S_{p_o}$ is a sequence of solutions of
the Abreu's equation
\begin{equation}\label{eqn4.1}
\sum_{i,j}\frac{\partial^2(u^{(k)})^{ij}}{\partial\xi_i\partial\xi_j}=-A^{(k)}\quad\text{in }\Delta.
%\Delta \rho=\frac{n+4}{2}\frac{\left\|\nabla\rho\right\|^{2}}{\rho}
%+\frac{\rho A^{(k)}}{n+2},
\end{equation}
Then there is a subsequence, still denoted by $u^{(k)}$, such that
$u^{(k)}$ converges to $u$ smoothly in any compact set $\Omega\subset \Delta$,
for some smooth and strictly convex function  $u$ in $\Delta$.
\end{theorem}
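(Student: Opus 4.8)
The plan is to establish compactness of the sequence $\{u^{(k)}\}$ by obtaining, on each $\Omega\subset\subset\Delta$, two-sided bounds on $\det\big((u^{(k)})_{ij}\big)$ that are uniform in $k$, and then to invoke the regularity theory for linearized Monge--Amp\`ere and Monge--Amp\`ere equations. First I would use Lemma \ref{prop_2.7}: since $(\Delta,A)$ is uniformly $K$-stable and each $A^{(k)}$ is smooth with $A^{(k)}\to A$, the functionals $\mc L_{A^{(k)}}$ are uniformly $K$-stable for large $k$ (with a slightly smaller $\lambda$), so $\|u^{(k)}\|_b\le C$ uniformly. By Donaldson's interior gradient estimate (Corollary 3 \cite{D2}), this forces the $u^{(k)}$ to be locally uniformly Lipschitz, hence (after passing to a subsequence) locally uniformly convergent to a convex limit $u$ on $\Delta$, and it also provides, on a fixed $\Omega$, a uniform bound $\sum_k x_i^2\le b$ on the gradient variables together with a uniform bound $d+f^{(k)}\ge d>0$ on the relevant section. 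The compactness of the sublevel sections $\bar S_{u^{(k)}}(p_o,C)$ follows from the same gradient bound.

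The heart of the argument is the determinant bound. From Lemma \ref{lemma_3.1}, applied with $\max_{\bar\Delta}|A^{(k)}|$ bounded uniformly in $k$, I get a uniform \emph{lower} bound $\det\big((u^{(k)})_{ij}\big)\ge \mff C_1>0$ on all of $\Delta$. For the \emph{upper} bound I would apply the new global estimate of Lemma \ref{lemma_3.4}: the hypothesis $\frac{1+\sum x_i^2}{(d+f)^2}\le b$ is exactly what the gradient bound supplies on the relevant region, so Lemma \ref{lemma_3.4} yields $\det\big((u^{(k)})_{ij}\big)\le \mff C_4\,(d+f^{(k)})^{2n}\exp\{\mff C_3 f^{(k)}\}$, and since $f^{(k)}$ is uniformly bounded above on $\Omega$ (because the Legendre transform of a locally uniformly Lipschitz family with fixed normalization is locally bounded), this gives a uniform upper bound $\det\big((u^{(k)})_{ij}\big)\le \mff C_2$ on $\Omega$. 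With $\mff C_1\le\det\big((u^{(k)})_{ij}\big)\le\mff C_2$ on $\Omega$, I have uniform two-sided control of the Monge--Amp\`ere measure.

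With the two-sided determinant bounds in hand, I would pass to the linearized equation. Writing the Abreu equation in the equivalent form \eqref{eqn2.18}, namely $\sum_{i,j} f^{ij}\,\partial^2_{x_ix_j}\big(\log\det(f_{kl})\big)=-A^{(k)}$, I treat $w^{(k)}:=\log\det\big((u^{(k)})_{ij}\big)$ as a solution of a linear second-order equation whose coefficient matrix $(u^{(k)})^{ij}$ has determinant pinched between $\mff C_2^{-1}$ and $\mff C_1^{-1}$. The interior Harnack and $C^\alpha$ estimates of Caffarelli--Guti\'errez \cite{C-G} for the linearized Monge--Amp\`ere operator (which require precisely such pinching of $\det D^2 u^{(k)}$) give a uniform interior $C^\alpha$ bound on $w^{(k)}$, i.e.\ a uniform $C^\alpha$ bound on $\det D^2 u^{(k)}$. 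Feeding this into Caffarelli's interior $C^{2,\alpha}$ estimate \cite{C1} for the Monge--Amp\`ere equation $\det D^2 u^{(k)}=e^{w^{(k)}}$ yields uniform interior $C^{2,\alpha}$ bounds on $u^{(k)}$; a standard bootstrap using Schauder theory on the now-uniformly-elliptic fourth-order equation promotes these to uniform $C^{m}$ bounds on every $\Omega\subset\subset\Delta$ for all $m$. By Arzel\`a--Ascoli and a diagonal argument, a subsequence converges in $C^\infty_{\mathrm{loc}}(\Delta)$ to the limit $u$, which is therefore smooth; strict convexity is preserved because the lower determinant bound passes to the limit.

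I expect the main obstacle to be the interface step in the previous paragraph, passing from the geometric determinant bounds of Lemma \ref{lemma_3.4} to the verification that the hypotheses of \cite{C-G} and \cite{C1} genuinely hold uniformly in $k$ on the Euclidean sets $\Omega$. In particular one must check that the sections $S_{u^{(k)}}(p_o,C)$ have the engulfing/normalization properties required by the Caffarelli--Guti\'errez theory and that the transition between the $\xi$-picture (where the determinant bound is naturally stated) and the $x$-picture of the Legendre transform (where the elliptic estimates are applied) is controlled uniformly; reconciling the two normalizations and ensuring the constant $d+f^{(k)}$ stays bounded away from $0$ and $\infty$ on a fixed compact set is the delicate point on which the whole compactness rests.
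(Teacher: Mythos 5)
Your opening steps (the uniform bound on $\|u^{(k)}\|_b$ from Lemma \ref{prop_2.7}, local uniform convergence to a convex limit, and the uniform lower bound $\det D^2u^{(k)}\ge \mff C_1$ from Lemma \ref{lemma_3.1}) match the paper. But the core of your argument --- two-sided pinching of $\det D^2u^{(k)}$ on a Euclidean compact set $\Omega\subset\subset\Delta$, followed by Caffarelli--Guti\'errez and Caffarelli applied on $\Omega$ --- has a genuine gap, and it is exactly the gap that forces the paper onto a different route. The estimates of \cite{C-G} and \cite{C1} are not Euclidean-local: they are formulated on \emph{sections} $S_{u^{(k)}}(\xi_1,t)$, and to invoke them with constants independent of $k$ you must know that, for some fixed $t>0$, the sections centered at points of $\Omega$ stay inside a fixed compact subset of $\Delta$ on which your pinching holds. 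That is a uniform modulus of strict convexity for the family $\{u^{(k)}\}$. If the limit $u$ were affine along a segment reaching $\partial\Delta$ (which none of the available bounds excludes a priori), then for every fixed $t>0$ the sections $S_{u^{(k)}}(\xi_1,t)$ would, for large $k$, stretch out toward $\partial\Delta$, where your upper determinant bound degenerates (its constant involves $\sup_\Omega f^{(k)}$, and $f^{(k)}\to\infty$ at the boundary). This is precisely the ``modulus of convexity'' obstruction the paper describes after Lemma \ref{lemma_3.2}: it was verified by Donaldson only for $n=2$, and your proposal tacitly assumes it in all dimensions. You flag this in your last paragraph as a verification to be carried out, but it cannot be carried out in the $\xi$-picture; it is the missing idea, not a technicality.

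The paper is organized to avoid this. First, a measure-theoretic claim (lower Monge--Amp\`ere bound, a.e.\ second differentiability, weak convergence of Monge--Amp\`ere measures) produces \emph{one} point $\xi_o$ at which the limit $u$ is strictly convex; only there are Donaldson's Lemma \ref{lemma_3.2} and the estimates of \cite{C-G}, \cite{C1} applied, giving smoothness of $u$ near $\xi_o$. Second --- and this is the decisive maneuver --- all functions are renormalized at $\xi_o$ and the entire regularity scheme is run for the Legendre transforms $f^{(k)}$, not for $u^{(k)}$: since each $u^{(k)}$ satisfies the Guillemin boundary conditions, $\nabla u^{(k)}(\Delta)=\real^n$, so the sections $S_{f^{(k)}}(0,C)=\{f^{(k)}<C\}$ are automatically bounded, uniformly pinched between balls, and have no boundary to collide with; Lemma \ref{lemma_3.1} gives the upper bound and Lemma \ref{lemma_3.4} the lower bound for $\det D^2f^{(k)}$ on these sections, the theory of \cite{C-G} and \cite{C1} applies there with uniform constants for every $C$, and smoothness plus strict convexity of $u$ on all of $\Delta$ are recovered at the end by transforming back. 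Two further slips in your write-up: the hypothesis of Lemma \ref{lemma_3.4} is a bound on $\frac{1+\sum x_i^2}{(d+f^{(k)})^2}$ required to hold globally in $\real^n$ (its proof is a global maximum-principle argument), so verifying it ``on the relevant region'' via interior gradient bounds is not a legitimate application of the lemma; the correct verification uses the normalization and an interior ball, which give $f^{(k)}(x)\ge r|x|-M$ for all $x$. And ``strict convexity is preserved because the lower determinant bound passes to the limit'' is not valid: a lower Monge--Amp\`ere bound alone does not prevent a convex limit from being affine along segments reaching $\partial\Delta$, which is exactly why the paper needs the claim about the point $\xi_o$.
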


We note that Theorem \ref{theorem_4.1} is equivalent to Theorem \ref{theorem_1.3}.

%\v
\begin{proof}[Proof of Theorem \ref{theorem_4.1}]
%\v
Since $(\Delta,A)$ is uniformly $K$-stable and $A^{(k)}$ converges to $A$ smoothly in $\bar \Delta$,
then $(\Delta,A_k)$ is uniformly $K$-stable for large $k$, i.e., 
$\Delta$ is $(A_k,\lambda)$-stable
for some constant $\lambda>0$ independent of $k$.
Since $u^{(k)}$ satisfies the Abreu's equation \eqref{eqn4.1}, then
$$\mc L_{A_k}(u^{(k)})=\int_{\Delta}\sum_{i,j} (u^{(k)})^{ij}(u^{(k)})_{ij}d\mu=
n \mbox{Area}(\Delta),$$ and hence,
$$
\|u^{(k)}\|_b\leq \lambda^{-1}\mc L_{A_k}(u^{(k)})=\lambda^{-1}n \mbox{Area}(\Delta).
$$
It follows that $u^{(k)}$ locally and
uniformly converges to a convex function $u$ in $\Delta$. %(cf. \cite{D1})
\v

 {\it Claim.} For any point $\xi\in \Delta$ and any
$B_{\delta}(\xi)\subset \Delta$, there exists a point
$\xi_o\in B_{\delta}(\xi)$ such that $u$ has second derivatives
and is strictly convex at $\xi_o$. Here, $B_{\delta}(\xi)$ denotes the
Euclidean ball centered at $\xi$ with radius $\delta.$

\v

The proof of the claim is the same as in \cite{CLS5}.
For convenience, we present the proof here.
Since $u$ is convex, it has second order derivatives
almost everywhere. Let $G \subset B_{\delta}(\xi)$ be the set
where $u$ has second order derivatives.
Then, $| B_{\delta}(\xi)\setminus G| = 0$. Let $O$ be an open subset of $B_{\delta}(\xi)$
such that $ B_{\delta}(\xi)\setminus G
\subset O$ with $|O|\leq \epsilon$. We choose $\epsilon$ so
small that
$$|B_{\delta}(\xi)\setminus O| >
\frac{1}{2}|B_{\delta}(\xi)|.$$ By Lemma \ref{lemma_3.1} and the weak convergence
of Monge-Amp\`ere measures, we have
\begin{equation}\label{eqn_4.2}
\int_{B_{\delta}(\xi)\setminus O}\det(u_{kl})d\mu > \frac{1}{2}\mff C_1
|B_{\delta}(\xi)|.
\end{equation}
Hence,  there exists a
point $\xi_o \in B_{\delta}(\xi)\setminus O$ such that
$$\det(u_{kl})(\xi_o) \geq
\frac{\mff C_1|B_{\delta}(\xi)|}{2|B_{\delta}(\xi)\setminus O|}.$$
%Otherwise,
%we would have
%$$
%\int_{B_{\xi}(\delta')- O}\det(u_{kl}) \leq \frac{1}{2}\mff C_1
%|B_{\xi}(\delta')|,$$
%which contradicts to \eqref{eqn_4.2}.
The claim is proved.

\v
We now choose coordinates such that $\xi_o=0$.
By adding linear functions, we assume that all $u^{(k)}$ and $u$ are normalized at $0$.
%({\bf Why do we still have convergence after adjusting by linear functions?})
Since $u$ is strictly convex at $0$, there exist constants  $\epsilon'>0$, $d_2>d_1>0$ and $b' >0$,
independent of $k$, such that, for large $k$,
$$B_{d_1}(0)\subset\bar{S}_{u^{(k)}}(0,\epsilon')\subset B_{d_2}(0)\subset \Delta,$$ and
$$
\sum_i \left(\frac{\partial u^{(k)}}{\partial \xi_i}\right)^2\leq b'
\quad\text{in } S_{u^{(k)}}(0,\epsilon').$$  By
Lemma \ref{lemma_3.1} and Lemma \ref{lemma_3.2}, we have
%uniform estimates for $\det(D^2u^{(k)})$
\begin{equation}\label{eqn4.3}
C_1\leq \det(u_{ij}^{(k)}) \leq C_2
\quad\text{in }S_{u^{(k)}}(0,\frac{1}{2}\epsilon'),\end{equation}
where $C_1<C_2$ are positive constants independent of $k$.

By an estimate due to Caffarelli and Guti\'{e}rrez  \cite{C-G}, 
there is a uniform interior $C^\alpha$-bound of  
% proved a H\"older estimate of
$\det(u^{(k)}_{ij})$.  Caffarelli and Guti\'{e}rrez originally proved this result 
for homogeneous linearized Monge-Amp\`ere equations. 
%assuming that the Monge-Amp\`ere measure $\mu[u]$ satisfies some
%condition, which is guaranteed by (4.3). If
Trudinger and Wang   \cite{TW2002} pointed out that such a result can be extended 
to  the Abreu's equation \eqref{eqn4.1} if $A_k\in
L^\infty(\bar \Delta)$, under the assumption \eqref{eqn4.3}. 
%that following their argument  one can obtain    H\"older
%continuity of $\det(u_{ij})$. 
(See also \cite{D2}.)  By  $C^{2,\alpha}$
estimates for Monge-Amp\`ere equation due to Caffarelli \cite{C1},
we have, for any $\Omega^*\subset B_{d_1}(0)$,
$$\|u^{(k)}\|_{C^{2,\alpha}(\Omega^*)}\leq C_2.$$
%({\bf Is this for $u^{(k)}$?})
Then, we employ the Schauder estimate to conclude that $\{u^{(k)}\}$
converges smoothly to $u$. Therefore,
$u$ is  a smooth and
strictly convex function in $S_u(0,\epsilon'/2)$.
%({\bf Why $\epsilon'/2$?})
%\v

Let $f^{(k)}$ be the Legendre transform of $u^{(k)}$.
Then, $\{f^{(k)}\}$ locally
uniformly converges to a convex function $f$ defined in the
whole $\mathbb{R}^n$. Furthermore, in a neighborhood of $0$,
$f$ is a smooth and strictly convex function such that its
Legrendre transform $u$ satisfies the Abreu's equation.

%For any $C>0$, set
%$$ S_{k}(0,C)=\left\{ x:\, f^{(k)}(x)\leq C\right\}.$$
By the convexity of $f^{(k)}$ and the local and uniform convergence of
$\{f^{(k)}\}$ to $f$, we conclude, for any $k$,
$$\frac{1+\sum_i x_i^2}{(d+ f^{(k)})^2} \leq b\quad\text{in }\mathbb R^n,$$
%({\bf Where does it hold?})
and, for any $C>1$,
$$B_r(0)\subset S_{f^{(k)}}(0,C)\subset B_{R_C}(0),$$
for some positive  constants $d$, $b$, $r$ and $R_C=R(C)>0$.

By Lemma \ref{lemma_3.1} and Lemma \ref{lemma_3.4}, we have
$$\exp\{-\mff C_3 C\}\frac{1}{(d+C)^{2n}}\leq \det (f_{ij}^{(k)})\leq \mff C_1 .$$
We note that each $f^{(k)}$ satisfies \eqref{eqn2.18}, with $f$ and $A$ 
there replaced by $f^{(k)}$ and 
$A^{(k)}$. By the $C^\alpha$-estimates due to Caffarelli and Guti\'{e}rrez  and the
$C^{2,\alpha}$-estimates due to Caffarelli as above,
we conclude that $\{f^{(k)}\}$ uniformly and smoothly converges to $f$
in $S_f(0,C/2)$. Since $C$ is arbitrary, $f$ is a smooth and strictly convex function in $\mathbb{R}^n$,
and the sequence $\{f^{(k)}\}$ locally and smoothly converges to $f$. By Legendre transforms,
we obtain that $u$ is a smooth and strictly convex function in $\Delta$
and that the sequence $\{u^{(k)}\}$ locally and smoothly converges to $u$.
This completes the proof of Theorem \ref{theorem_4.1}. \end{proof}

\end{document}